\numberwithin{equation}{section}
\numberwithin{figure}{section}
\theoremstyle{plain}
\newtheorem{thm}{\protect\theoremname}
  \theoremstyle{definition}
  \newtheorem{example}[thm]{\protect\examplename}
  \theoremstyle{plain}
  \theoremstyle{remark}
  \newtheorem{rem}[thm]{\protect\remarkname}
  \providecommand{\corollaryname}{Corollary}
  \providecommand{\examplename}{Example}
  \providecommand{\remarkname}{Remark}
\providecommand{\theoremname}{Theorem}
\begin{document}

\title{Woon's tree and sums over compositions}

\author{C. Vignat and T. Wakhare}
\address{T. Wakhare, University of Maryland, College Park, MD 20742, USA, twakhare@gmail.com}
\address{C. Vignat, L.S.S. Supelec, Universit\'{e} Paris Sud-Orsay, France and Department of Mathematics, Tulane University, New Orleans, USA, cvignat@tulane.edu}
\begin{abstract}
This article studies sums over all compositions of an integer. We derive a generating function for this quantity, and apply it to several special functions, including various generalized Bernoulli numbers. We connect composition sums with a recursive tree introduced by S.G. Woon and extended by P. Fuchs under the name {\it general PI tree}, in which an output sequence $\{x_n\}$ is associated to the input sequence $\{g_n\}$ by summing over each row of the tree built from $\{g_n\}$. Our link with the notion of compositions allows to introduce a modification of Fuchs' tree that takes into account nonlinear transforms of the generating function of the input sequence. We also introduce the notion of \textit{generalized sums over compositions}, where we look at composition sums over each part of a composition.
\end{abstract}

\maketitle

\section{Introduction}
Sums over compositions are an object of study in their own right, and there is a vast literature considering sums over compositions or enumeration of restricted compositions. A \textit{composition} of an integer number $n$ is any sequence of
integers $n_{i}\ge1$, called \textit{parts} of $n$, such that
\[
n=n_{1}+\dots+n_{m}.
\]
There are $2$ compositions of $2:$
\[
2,\thinspace\thinspace1+1,
\]
$4$ compositions of $3:$
\[
3,\thinspace\thinspace2+1,\thinspace\thinspace1+2,\thinspace\thinspace1+1+1
\]
and more generally $2^{n-1}$ compositions of $n.$ These compositions
can be represented as follows:
\[
2:\thinspace\thinspace\newmoon\newmoon,\thinspace\thinspace\newmoon\vert\newmoon
\]
\[
3:\thinspace\thinspace\newmoon\newmoon\newmoon,\thinspace\thinspace\newmoon\newmoon\vert\newmoon,\thinspace\thinspace\newmoon\vert\newmoon\newmoon,\thinspace\thinspace\newmoon\vert\newmoon\vert\newmoon
\]
and so on. We also introduce the following notation:
let $\mathcal{C}\left(n\right)$ denote the set of all compositions
of $n;$  for an element $\pi=\left\{ n_{1},\dots,n_{m}\right\} \in \mathcal{C}\left(n\right),$
denote $m=\vert\pi\vert$ its length, i.e the number of its parts. For a sequence $\{g_n\},$ we also use the multi-index notation
\[
g_{\pi}=g_{n_{1}}\dots g_{n_{m}}.
\]
We can now state our main result, a generating function for sums over compositions:
\begin{thm}
Let $g(z) = \sum_{n \geq 1} g_n z^n $ and $f(z) = \sum_{n\geq 0 }f_n z^n$. We then have the generating function identity

\begin{equation}
f(g(z)) = f_0 + \sum_{n \geq 1} z^n \sum_{\pi \in \mathcal{C}_n} f_{\vert \pi \vert} g_{\pi}.
\end{equation}
\end{thm}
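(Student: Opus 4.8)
The plan is to work entirely at the level of formal power series and expand the composition $f(g(z))$ term by term. First I would write $f(g(z)) = \sum_{m \geq 0} f_m\, g(z)^m$, isolating the constant term $f_0$ coming from $m=0$ (where $g(z)^0 = 1$). The whole identity then reduces to understanding the coefficient structure of each power $g(z)^m$ for $m \geq 1$.

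The key step is to expand $g(z)^m$ as an $m$-fold product. Since $g(z) = \sum_{n \geq 1} g_n z^n$, multiplying out gives
\[
g(z)^m = \sum_{n_1, \ldots, n_m \geq 1} g_{n_1} \cdots g_{n_m}\, z^{n_1 + \cdots + n_m}.
\]
Here is the crucial observation linking the algebra to the combinatorics: an ordered tuple $(n_1, \ldots, n_m)$ of positive integers with $n_1 + \cdots + n_m = n$ is exactly a composition of $n$ into $m$ parts, and in the multi-index notation $g_{n_1} \cdots g_{n_m} = g_{\pi}$. Grouping the sum above by the total degree $n = n_1 + \cdots + n_m$ therefore yields $g(z)^m = \sum_{n \geq m} z^n \sum_{\pi \in \mathcal{C}_n,\, |\pi| = m} g_{\pi}$, where the constraint $n \geq m$ reflects that each part is at least $1$.

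Finally I would substitute this back, interchange the summations over $m$ and $n$, and regroup. For a fixed $n \geq 1$ the number of parts $m$ ranges over $1, \ldots, n$, and replacing $f_m$ by $f_{\vert \pi \vert}$ under the constraint $\vert \pi \vert = m$ merges the two inner sums into the single unrestricted sum $\sum_{\pi \in \mathcal{C}_n} f_{\vert \pi \vert} g_{\pi}$, which gives the claimed identity. I do not anticipate a genuine obstacle: the only point requiring care is the justification of the rearrangement, but this is immediate since only finitely many pairs $(m, \pi)$ contribute to any fixed coefficient of $z^n$, so every manipulation is a legitimate identity of formal power series and no convergence question arises.
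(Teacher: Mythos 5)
Your proof is correct, but it takes a genuinely different route from the paper. You prove the identity from scratch by direct expansion: writing $f(g(z)) = \sum_{m \ge 0} f_m\, g(z)^m$, expanding $g(z)^m$ as a sum over ordered $m$-tuples of positive integers, recognizing those tuples as compositions of $n$ into $m$ parts, and regrouping by total degree --- with the rearrangement justified because $g$ has no constant term, so each coefficient of $z^n$ receives only finitely many contributions ($m \le n$). The paper instead invokes an external result of Vella, a composition-indexed form of the Fa\'{a} di Bruno formula,
\[
T_n(f \circ g;a) = \sum_{\pi \in \mathcal{C}_n} T_{\vert \pi \vert}(f;g(a)) \prod_{i=1}^n [T_i(g;a)]^{\pi_i},
\qquad T_n(f;a) = \frac{f^{(n)}(a)}{n!},
\]
and simply specializes it at $a=0$, where $g(0)=0$, $T_n(f;0)=f_n$, and $T_n(g;0)=g_n$. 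What each approach buys: yours is elementary and fully self-contained, making the theorem transparent as a statement about formal power series with no analytic or differential input at all; the paper's is shorter and situates the identity within the classical Fa\'{a} di Bruno circle of ideas, which fits its broader narrative of connecting composition sums, trees, and derivative formulas --- but at the cost of resting on a nontrivial cited result. Arguably your argument is the more natural proof of the statement as given, and it implicitly reproves the $a=0$ case of Vella's formula.
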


This general result unites many previous results spread across the literature, because the composite generating function $f(g(z))$ often has a simple closed form. 
We link this formula to Woon's and Fuchs' trees, which provide graphical ways of visualizing it, as described below. 

In Section \ref{sec1} we introduce Woon's and Fuchs' tree, and in Section \ref{sec2} we connect them to compositions for the first time. In Section \ref{sec3}, we introduce our main results, and use them to derive a variety of closed form expressions for sums over compositions. In Section \ref{sec4} we introduce a new notation for generalized composition sums, and explore some basic identities for them.

Throughout, we apply our results to special functions and generalized Bernoulli numbers, which allows us to derive expressions for them as sums over compositions.

\section{Background}\label{sec1}

Woon's tree, as introduced by S.G. Woon in \cite{Woon}, is the following construction
\begin{center}
\begin{forest}
[$\frac{1}{2!}$[$-\frac{1}{3!}$[$\frac{1}{4!}$[$-\frac{1}{5!}$][$\frac{1}{2!4!!}$]][$-\frac{1}{2!3!}$[$\frac{1}{3!3!}$][$-\frac{1}{2!2!3!}$]]][$\frac{1}{2!2!}$[$-\frac{1}{3!2!}$[$\frac{1}{4!2!}$][$-\frac{1}{2!3!2!}$]][$\frac{1}{2!2!2!}$[$-\frac{1}{3!2!2!}$][$\frac{1}{2!2!2!2!}$]]]]
\end{forest}
\par\end{center}

with the rule
\begin{center}
\begin{forest}
[$\pm\frac{1}{a_{1}!\dots a_{k}!}$[$\mp\frac{1}{\left(a_{1}+1\right)!\dots a_{k}!}$][$\pm\frac{1}{2!a_{1}!\dots a_{k}!}$]]
\end{forest}
\par\end{center}

\begin{flushleft}
S.G. Woon proved, using Euler MacLaurin summation formula, that the successive row sums
\begin{align*}
\frac{1}{2!} & =\frac{1}{2},\thinspace\thinspace\frac{1}{2!2!}-\frac{1}{3!}=\frac{1}{4}-\frac{1}{6}=\frac{1}{12},\thinspace\thinspace\frac{1}{4!}-\frac{1}{2!3!}-\frac{1}{3!2!}+\frac{1}{2!2!2!}=0,\\
\frac{1}{2!2!2!2!} & -\frac{3}{2!2!3!}+\frac{2}{2!4!}+\frac{1}{3!3!}-\frac{1}{5!}=-\frac{1}{720},\dots
\end{align*}
coincide with the sequence $\left\{ \left(-1\right)^{n}\frac{B_{n}}{n!}\right\} $
where the Bernoulli numbers are defined by the generating function
\[
\sum_{n\ge0}\frac{B_{n}}{n!}z^{n}=\frac{z}{e^{z}-1}.
\]
\par\end{flushleft}

Later on, Fuchs \cite{Fuchs} considered  a more general construction, the {\it general PI tree}. This associates, to the sequence of real numbers $\left\{ g_{n}\right\} ,$ which we call the {\it input sequence},
the tree
\begin{center}
\begin{forest}
[$g_{1}$[$g_{1}g_{1}$[$g_{1}g_{1}g_{1}$[$g_{1}g_{1}g_{1}g_{1}$][$g_{2}g_{1}g_{1}$]][$g_{2}g_{1}$[$g_{1}g_{2}g_{1}$][$g_{3}g_{1}$]]][$g_{2}$[$g_{1}g_{2}$[$g_{1}g_{1}g_{2}$][$g_{2}g_{2}$]][$g_{3}$[$g_{1}g_{3}$][$g_{4}$]]]]
\end{forest}
\par\end{center}

built using the two operators $P$ (for "put a 1") and $I$ (for "increase") as follows
\begin{center}
\begin{forest}
[$g_{i_{1}}g_{i_{2}}\dots g_{i_{k}}$[$g_{1}g_{i_{1}}g_{i_{2}}\dots g_{i_{k}}$, 
edge label={node[midway,left,font=\scriptsize]{$P$}}
][$g_{i_{1}+1}g_{i_{2}\dots}g_{i_{k}}$, 
edge label={node[midway,right,font=\scriptsize]{$I$}}]]
\end{forest}
\par\end{center}
The row sums of the tree generate what will be called the {\it output sequence} $\{x_n\}.$
Note that although the $g_{n}$ are real numbers, they should
be considered as noncommutating variables in the process of construction
of the tree. Moreover, Woon's tree corresponds to a PI tree with the particular choice
\[
g_{n}=\frac{-1}{\left(n+1\right)!}.
\]

Fuchs proved that if $x_{0}=1,$ then the sequence of row sums $\left\{ x_{n}\right\} _{n\ge1}$
of the general PI tree is related to the sequence of its entries $\left\{ g_{n}\right\} _{n\ge1}$
by the convolution
\begin{equation}
x_{n}=\sum_{j=1}^{n}g_{j}x_{n-j}=g_{n}+g_{n-1}x_{1}+\dots+g_{1}x_{n-1}.\label{eq:recursion}
\end{equation}

This result translates in terms of generating functions as follows - see \cite{Fuchs}:
if $\left\{ g_{n}\right\} $ and $\left\{ x_{n}\right\} $ are the
input and output sequences of Fuchs' tree, then their generating functions
\[
x\left(z\right)=\sum_{n\ge1}x_{n}z^{n},\thinspace\thinspace g\left(z\right)=\sum_{n\ge1}g_{n}z^{n}
\]
are related as
\begin{equation}
x\left(z\right)=\frac{g\left(z\right)}{1-g\left(z\right)},
\end{equation}
and
\begin{equation}
\label{CX}
g\left(z\right)=\frac{x\left(z\right)}{1+x\left(z\right)}.
\end{equation}

\begin{example}
\label{ex:Bernoulli}
In the case of Bernoulli numbers, the generating function of the input sequence is 
\[
g\left(z\right)=\sum_{k\ge1}\frac{-1}{\left(k+1\right)!}z^{k}=-\frac{1}{z}\left(e^{z}-1-z\right)=1-\frac{e^{z}-1}{z}
\]
so that the generating function of the row sums sequence is
\[
x\left(z\right)=\frac{1-\frac{e^{z}-1}{z}}{\frac{e^{z}-1}{z}}=\frac{z}{e^{z}-1}-1=\sum_{n\ge1}\frac{B_{n}}{n!}z^{n}.
\]
The recursive identity (\ref{eq:recursion}) is the well-known identity
for Bernoulli numbers \cite[24.5.3]{NIST}
\[
B_{n}=-\sum_{k=1}^{n}\binom{n}{k}\frac{B_{n-k}}{k+1}.
\]
\end{example}
\begin{example}
The Bernoulli polynomials $B_{n}\left(x\right)$ are defined by the generating function
\[
\sum_{n\ge0}\frac{B_{n}\left(x\right)}{n!}z^{n}=\frac{ze^{zx}}{e^{z}-1}.
\]
We deduce that the tree with polynomial entries
\[
g_{n}=\frac{\left(-1\right)^{n+1}}{\left(n+1\right)!}\left[x^{n+1}-\left(x-1\right)^{n+1}\right]
\]
has row sums that coincide with the Bernoulli polynomials:
\[
x_{n}=\frac{B_{n}\left(x\right)}{n!}.
\]
This gives a decomposition of Bernoulli polynomials as a sum of elementary
polynomials; for example
\[
B_{1}\left(x\right)=\frac{1}{2}\left[x^{2}-\left(x-1\right)^{2}\right]
\]
\[
\frac{B_{2}\left(x\right)}{2!}=\frac{1}{2}\left[x^{2}-\left(x-1\right)^{2}\right]-\frac{1}{6}\left[x^{3}-\left(x-1\right)^{3}\right].
\]
The general expression for $\frac{B_{n}\left(x\right)}{n!}$ is given
in Thm. \ref{Bernoulli expansion} below.
\end{example}

\begin{example}
Higher-order Bernoulli numbers, also called N\"{o}rlund polynomials, are defined for an integer parameter
$p$ by the generating function
\begin{equation}
\label{Norlund gf}
\sum_{n\ge0}\frac{B_{n}^{\left(p\right)}}{n!}z^{n}=\left(\frac{z}{e^{z}-1}\right)^{p}.
\end{equation}

The output sequence 
\[
x_{n}=\frac{B_{n}^{\left(p\right)}}{n!}
\]
corresponds to the input sequence
\[
g_{n}=-\frac{p!}{\left(p+n\right)!}\left\{ \begin{array}{c}
n+p\\
p
\end{array}\right\} 
\]
where $\left\{ \begin{array}{c}
n\\
p
\end{array}\right\} $ is the Stirling number of the second kind. 
\begin{example}
The hypergeometric Bernoulli numbers, as introduced by F.T. Howard \cite{Howard}, are defined, for $a>0$ and $b>0,$ by the generating function
\[
\sum_{n\ge0}\frac{B_{n}^{\left(a,b\right)}}{n!}z^{n}=\frac{1}{_{1}F_{1}\left(\begin{array}{c}
a\\
a+b
\end{array};z\right)}
\]
where $_{1}F_{1}$ is the hypergeometric function
\[
_{1}F_{1}\left(\begin{array}{c}
a\\
c
\end{array};z\right) = \sum_{n\ge0} \frac{\left(a\right)_{n}}{\left(c\right)_{n}} \frac{z^n}{n!}
\]
with the notation $\left(a\right)_{n}$ for the Pochhammer symbol
\[
\left(a\right)_{n}=\frac{\Gamma\left(a+n\right)}{\Gamma\left(a\right)}.
\]
The output sequence
\[
x_{n}=\frac{B_{n}^{\left(a,b\right)}}{n!}
\]
corresponds to the input sequence
\[
g_{n}=-\frac{1}{n!}\frac{\left(a\right)_{n}}{\left(a+b\right)_{n}}.
\]
\end{example}
\end{example}

\section{Connection to compositions}\label{sec2}

\subsection{Definitions}

In the examples studied so far, we were able to compute a few row
sums of Fuchs' tree, but we still need a general and non-recursive formula that gives
the row sum sequence $\left\{ x_{n}\right\} $ explicitly as a function
of the input sequence $\left\{ g_{n}\right\} .$ This implies
a better description of the row generating process of Woon's tree,
which requires the notion of compositions.

The representation
\begin{center}
\begin{forest}
[$\newmoon$[$\newmoon$$\vert$$\newmoon$[$\newmoon$$\vert$$\newmoon$$\vert$$\newmoon$][$\newmoon$$\newmoon$$\vert$$\newmoon$]][$\newmoon$$\newmoon$[$\newmoon$$\vert$$\newmoon$$\newmoon$][$\newmoon$$\newmoon$$\newmoon$]]]
\end{forest}
\par\end{center}
suggests a natural bijection between the generation process of the next row
in Woon's tree and the generation process of the compositions of the integer
$n+1$ in terms of those of $n.$ We restate Fuchs' result \cite{Fuchs} in terms of compositions, 
and then apply this result to derive several new identities for Catalan numbers and Hermite polynomials.

\subsection{Fuchs' result}
We can now restate Fuchs' main result \eqref{eq:recursion} as follows
\begin{thm}\label{thm:5}
The sequence of row sums $\left\{ x_{n}\right\} $ in Woon's tree
can be computed from its sequence of entries $\left\{ g_{n}\right\} $
as the sum over compositions
\begin{equation}
x_{n}=\sum_{\pi\in\mathcal{C}\left(n\right)}g_{\pi}=\sum_{p=1}^n\sum_{\underset{k_{i}\ge1}{k_{1}+\dots+k_{p}=n}}g_{k_{1}}\dots g_{k_{p}}.\label{eq:compositions}
\end{equation}
This sum over compositions can also be expressed
as the weighted sum over convolutions
\begin{equation}
x_{n}=\sum_{p=1}^{n}\binom{n+1}{p+1}\sum_{\underset{k_{i}\ge 0}{k_{1}+\dots+k_{p}=n}}g_{k_{1}}\dots g_{k_{p}}.\label{eq:convolutions}
\end{equation}
Moreover, these relations can be inverted by exchanging $x_n$ with $-g_n$
: the input sequence $\{g_n\}$ in Woon's tree can be recovered from its row sum sequence $\{x_n\}$ as the sum over compositions
\begin{equation}
g_{n} =\sum_{\pi\in\mathcal{C}\left(n\right)}\left(-1\right)^{\pi+1}x_{\pi}=\sum_{p=1}^n\left(-1\right)^{p+1}\sum_{\underset{k_{i}\ge1}{k_{1}+\dots+k_{p}=n}}x_{k_{1}}\dots x_{k_{p}}
\label{eq:inverse compositions}
\end{equation}
or as the weighted sum over convolutions
\begin{equation}
x_{n}=\sum_{p=1}^{n}\left(-1\right)^{p+1}\binom{n+1}{p+1}\sum_{\underset{k_{i}\ge 0}{k_{1}+\dots+k_{p}=n}}x_{k_{1}}\dots x_{k_{p}}.
\end{equation}
\end{thm}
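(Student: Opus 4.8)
The plan is to handle the four identities as two pairs: I would prove the forward pair \eqref{eq:compositions}--\eqref{eq:convolutions} directly, and then obtain the inverse pair essentially for free from a symmetry of the fundamental relation $x(z)=g(z)/(1-g(z))$. To prove \eqref{eq:compositions} I would specialize our main generating function identity to $f(z)=z/(1-z)$, so that $f_0=0$ and $f_n=1$ for every $n\ge 1$. Then $f(g(z))=g(z)/(1-g(z))=x(z)$ by Fuchs' relation, while the right-hand side of the main identity collapses to $\sum_{n\ge 1}z^n\sum_{\pi\in\mathcal{C}(n)}g_\pi$; matching coefficients of $z^n$ gives $x_n=\sum_{\pi\in\mathcal{C}(n)}g_\pi$. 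Equivalently, one may simply expand $g/(1-g)=\sum_{p\ge 1}g(z)^p$ and read off $[z^n]g(z)^p$ as the sum over compositions of $n$ into exactly $p$ positive parts.

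For \eqref{eq:convolutions} I would adopt the convention $g_0=-1$, which is exactly what makes the weak-composition sums $T_p(n):=\sum_{k_1+\dots+k_p=n,\,k_i\ge 0}g_{k_1}\dots g_{k_p}=[z^n](g(z)-1)^p$ meaningful. Sorting each weak composition by which of its $p$ slots carry a positive part yields the inclusion-exclusion relation
\[
T_p(n)=\sum_{j=1}^{p}\binom{p}{j}(-1)^{p-j}S_j(n),\qquad S_j(n):=\sum_{\substack{k_1+\dots+k_j=n\\ k_i\ge 1}}g_{k_1}\dots g_{k_j},
\]
the empty slots contributing a factor $g_0=-1$ each. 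Substituting this into the right-hand side of \eqref{eq:convolutions} and interchanging the order of summation reduces the claim to the purely numerical identity
\[
\sum_{p=j}^{n}\binom{n+1}{p+1}\binom{p}{j}(-1)^{p-j}=1,\qquad 1\le j\le n,
\]
after which the right-hand side becomes $\sum_{j=1}^{n}S_j(n)=x_n$ by \eqref{eq:compositions}. This binomial identity is the main obstacle, but I expect to dispatch it cleanly by recognizing $\sum_{p}\binom{n+1}{p+1}x^p=\big((1+x)^{n+1}-1\big)/x=\sum_{i=0}^{n}(1+x)^i$, applying $\frac{1}{j!}\,\frac{d^{j}}{dx^{j}}$, and evaluating at $x=-1$, where only the $i=j$ term of the geometric sum survives and contributes $j!$.

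Finally, both inverse formulas follow from the observation, announced in the statement, that replacing $(g_n,x_n)$ by $(\tilde g_n,\tilde x_n)=(-x_n,-g_n)$ preserves the fundamental relation: indeed $\tilde x=-g=-x/(1+x)=\tilde g/(1-\tilde g)$ using \eqref{CX}. Hence $(\tilde g,\tilde x)$ is again an input/output pair of a Fuchs tree, and applying \eqref{eq:compositions} and \eqref{eq:convolutions} to it gives $-g_n=\sum_{\pi\in\mathcal{C}(n)}(-1)^{|\pi|}x_\pi$ and $-g_n=\sum_{p=1}^{n}\binom{n+1}{p+1}(-1)^{p}\sum_{k_i\ge 0}x_{k_1}\dots x_{k_p}$, where the convention $\tilde g_0=-x_0=-1$ is consistent with the standing hypothesis $x_0=1$. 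Multiplying through by $-1$ produces exactly \eqref{eq:inverse compositions} and the inverse convolution identity; in particular the left-hand side of the final display of the theorem should read $g_n$ rather than $x_n$.
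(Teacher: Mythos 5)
Your proof is correct, and while it shares the paper's overall skeleton --- the composition formula from Fuchs' generating-function relation, the convolution form via an inclusion--exclusion between strict and weak composition sums, and the inversions via the symmetry of \eqref{CX} --- the key middle step runs in the opposite direction from the paper's. The paper substitutes $g_n=-w_n/n!$, rewrites everything with multinomial coefficients, expresses the \emph{strict} sums in terms of the \emph{weak} ones by binomial inversion (justified only as ``can be proved by induction on $m$''), and then needs the hockey-stick evaluation $\sum_{m=p}^{n}\binom{m}{p}=\binom{n+1}{p+1}$. You instead expand each \emph{weak} sum in terms of \emph{strict} ones directly by sorting slots (no inversion needed, the signs coming from the empty slots' factor $g_0=-1$), and then need the alternating identity $\sum_{p=j}^{n}\binom{n+1}{p+1}\binom{p}{j}(-1)^{p-j}=1$, which your derivative-of-a-geometric-sum argument establishes cleanly. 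Your route buys three things the paper leaves implicit or unproved: it makes explicit the convention $g_0=-1$ (the paper's $w_0=1$), without which the weak-composition sums in \eqref{eq:convolutions} are not even defined; it actually proves \eqref{eq:compositions}, by expanding $g/(1-g)=\sum_{p\ge 1}g^{p}$, rather than inheriting it silently from Fuchs' recursion; and it spells out the $(g,x)\mapsto(-x,-g)$ symmetry that the paper compresses into the single sentence ``the inversion of these relations is deduced from the identity \eqref{CX}.'' You are also right that the final display of the theorem contains a typo: its left-hand side should read $g_n$, not $x_n$. One stylistic caution: your first-choice argument for \eqref{eq:compositions} invokes the later Theorem \ref{thm:main}; since that theorem is proved independently (via Vella's Fa\'a di Bruno formula) this is not circular, but the self-contained geometric-series expansion you offer as an alternative is the better choice for a theorem that precedes it in the paper.
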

\begin{proof}
Denote $g_{n}=-\frac{w_{n}}{n!}$ so that
\begin{align*}
x_{n} & =\sum_{m=1}^{n} \sum_{\underset{k_{i}\ge1}{k_{1}+\dots+k_{m}=n}}g_{k_{1}}\dots g_{k_{m}}=\sum_{m=1}^{n}\left(-1\right)^{m}\sum_{\underset{k_{i}\ge1}{k_{1}+\dots+k_{m}=n}}\frac{w_{k_{1}}}{k_{1}!}\dots\frac{w_{k_{m}}}{k_{m}!}\\
 & =\sum_{m=1}^{n} \frac{\left(-1\right)^{m}}{n!} \sum_{\underset{k_{i}\ge1}{k_{1}+\dots+k_{m}=n}}\binom{n}{k_{1},\dots,k_{m}}w_{k_{1}}\dots w_{k_{m}}.
\end{align*}
Denote the incomplete sum
\[
\tilde{S}_{m,n}=\sum_{\underset{k_{i}\ge1}{k_{1}+\dots+k_{m}=n}}\binom{n}{k_{1},\dots,k_{m}}w_{k_{1}}\dots w_{k_{m}}
\]
and its complete version
\[
S_{m,n}=\sum_{\underset{k_{i}\ge0}{k_{1}+\dots+k_{m}=n}}\binom{n}{k_{1},\dots,k_{m}}w_{k_{1}}\dots w_{k_{m}}
\]
so that
\[
x_{n}=\sum_{m=1}^{n}\frac{\left(-1\right)^{m} }{n!}\tilde{S}_{m,n}
\]
and both variables $S_{m,n}$ and $\tilde{S}_{m,n}$ are related as
\[
\tilde{S}_{m,n}=\sum_{p=1}^{m}\left(-1\right)^{m-p}\binom{m}{p}S_{p,n},
\]
that can be proved for example by induction on $m.$
We deduce, following the same steps as in the Bernoulli case above,
\[
x_{n}=\sum_{m=1}^{n}\frac{\left(-1\right)^{m} }{n!}\sum_{p=1}^{m}\left(-1\right)^{m-p}\binom{m}{p}S_{p,n}=\sum_{p=1}^{n}\frac{\left(-1\right)^{p}}{n!}S_{p,n}\sum_{m=p}^{n} \binom{m}{p}.
\]
Moreover
\[
S_{p,n}=\sum_{\underset{k_{i}\ge0}{k_{1}+\dots+k_{p}=n}}\binom{n}{k_{1},\dots,k_{p}}w_{k_{1}}\dots w_{k_{p}}=\left(-1\right)^{p}n!\sum_{\underset{k_{i}\ge0}{k_{1}+\dots+k_{p}=n}}g_{k_{1}}\dots g_{k_{p}}
\]
and the proof of the first part follows.
The inversion of these relations is deduced from the identity \eqref{CX}.\end{proof}

The result of Thm. \ref{thm:5} has been rediscovered many times in relation to sums over compositions. For examples of identities derived from this result, see \cite{Sills}, \cite{Hoggatt} and  \cite{Gessel}. Selecting various $g_{k} =1$ for $k$ in some set of indices $J$, and $g_i = 0$ otherwise, also gives the generating function for the number of compositions into parts from this set $J$. For example, we can let $g_{2n}=0$ and $g_{2n+1}=1$ to find a formula for the number of compositions into odd parts.

\subsection{Invariant sequences}
In this section, we look for sequences that are invariant by Fuchs' tree.
\subsubsection{Catalan numbers}

The Catalan numbers are defined by the generating function
\[
\sum_{n\ge0}C_{n}z^{n}=\frac{1-\sqrt{1-4z}}{2z}.
\]
They are invariants of Woon's tree in the following sense.
\begin{thm}
If 
\[
g_{n}=\begin{cases}
C_{n-1} & n\ge1\\
1 & n=0
\end{cases}
\]
is the input sequence of Woon's tree, then the row sums are
\[
x_{n}=C_{n},\thinspace\thinspace n\ge0.
\]
As a consequence, the Catalan numbers satisfy the sum over compositions
identities
\begin{equation}
C_{n}=\sum_{m=1}^{n}\sum_{\underset{k_{i}\ge1}{k_{1}+\dots+k_{m}=n}}C_{k_{1}-1}\dots C_{k_{m}-1}.\label{eq:Catalan sum over compositions}
\end{equation}
and
\begin{equation}
C_{n-1}=\sum_{m=1}^{n}\left(-1\right)^{m+1}
\sum_{\underset{k_{i}\ge1}{k_{1}+\dots+k_{m}=n}}C_{k_{1}}\dots C_{k_{m}}.
\label{eq:inverse Catalan sum over compositions}
\end{equation}

\end{thm}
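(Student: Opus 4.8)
The plan is to pass immediately to generating functions and reduce everything to the defining quadratic functional equation of the Catalan numbers. First I would encode the input sequence: since $g_n = C_{n-1}$ for $n \geq 1$, a single index shift gives
\[
g(z) = \sum_{n \geq 1} C_{n-1} z^n = z \sum_{m \geq 0} C_m z^m = z\, C(z),
\]
where $C(z) = \frac{1 - \sqrt{1-4z}}{2z}$. Note that $g_0 = 1$ does not enter $g(z)$, whose sum runs over $n \geq 1$; it is recorded only so that the final identity $x_n = C_n$ extends to $n = 0$, consistently with $C_0 = 1$ and the convention $x_0 = 1$ built into Fuchs' result.

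Next I would apply the generating function form of Fuchs' theorem, $x(z) = g(z)/(1 - g(z))$, to get $x(z) = z C(z)/(1 - z C(z))$. The crux is the simplification of the denominator. From the functional equation $C(z) = 1 + z C(z)^2$, equivalently $z C(z)^2 = C(z) - 1$, dividing by $C(z)$ yields $z C(z) = 1 - 1/C(z)$, so that $1 - z C(z) = 1/C(z)$. Substituting this back collapses the quotient to
\[
x(z) = z C(z) \cdot C(z) = z C(z)^2 = C(z) - 1 = \sum_{n \geq 1} C_n z^n,
\]
which reads off as $x_n = C_n$ for every $n \geq 1$, and hence for all $n \geq 0$.

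Finally, the two composition identities are direct specializations of Theorem \ref{thm:5}. Feeding $x_n = C_n$ and $g_{k_i} = C_{k_i - 1}$ into the forward sum \eqref{eq:compositions} produces \eqref{eq:Catalan sum over compositions}, and feeding the same substitutions into the inversion formula \eqref{eq:inverse compositions} produces \eqref{eq:inverse Catalan sum over compositions}. The only step demanding genuine care is the middle one: everything hinges on recognizing the denominator identity $1 - z C(z) = 1/C(z)$, after which the conclusion is a one-line consequence of the quadratic equation satisfied by $C(z)$.
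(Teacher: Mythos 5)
Your proposal is correct and takes essentially the same route as the paper: compute $g(z)=\sum_{n\ge 1}C_{n-1}z^n = zC(z)=\frac{1-\sqrt{1-4z}}{2}$, apply Fuchs' relation $x(z)=g(z)/(1-g(z))$, identify the result as $C(z)-1=\sum_{n\ge1}C_nz^n$, and then invoke Theorem \ref{thm:5} for the two composition identities. The only difference is in the middle algebra: the paper simplifies $\frac{1-\sqrt{1-4z}}{1+\sqrt{1-4z}}$ by manipulating the radicals directly, whereas you reach the same conclusion through the functional equation $C(z)=1+zC(z)^2$, which avoids the square root but is otherwise the same computation.
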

\begin{rem}
Identity \eqref{eq:Catalan sum over compositions} can be considered as a generalization of the classic convolution identity for Catalan numbers \cite[26.5.3]{NIST}
\[
C_n = \sum_{k=0}^{n-1}C_{k}C_{n-1-k}.
\]
\end{rem}
\begin{proof}
From the generating function of the input sequence
\[
g\left(z\right)=\sum_{n\ge1}g_{n}z^{n}=\sum_{n\ge1}C_{n-1}z^{n}=z\frac{1-\sqrt{1-4z}}{2z}=\frac{1-\sqrt{1-4z}}{2},
\]
we deduce the generating function of the row sums
\[
X\left(z\right)=\frac{g\left(z\right)}{1-g\left(z\right)}=\frac{1-\sqrt{1-4z}}{1+\sqrt{1-4z}}=-1+\frac{1-\sqrt{1-4z}}{2z}=\sum_{n\ge1}C_{n}z^{n},
\]
which proves the result. Both identities for Catalan numbers \eqref{eq:Catalan sum over compositions} and \eqref{eq:inverse Catalan sum over compositions}
are a consequence of \eqref{eq:compositions} and \eqref{eq:inverse compositions}.
\end{proof}

\subsubsection{Hermite polynomials}

Another invariant sequence of Woon's tree is the sequence of Hermite polynomials $\left\{ H_{n}\left(x\right)\right\}$  defined by the generating function
\[
\sum_{n\ge0}\frac{H_{n}\left(x\right)}{n!}z^{n}=e^{2xz-z^{2}}.
\]
It can be checked that if the entries of Woon's tree are chosen as
\[g_{n}=-\imath^{n}\frac{H_{n}\left(\imath x\right)}{n!}
\]
then the sequence of row sums is equal to
\[
x_{n}=\frac{H_{n}\left(x\right)}{n!}.
\]

\section{A Nonlinear Generalization}\label{sec3}
We are able to generalize these results to sums over compositions with weights, which corresponds to a further generalization of Fuchs' tree. Because Fuchs' tree graphically represents sums over compositions, we can represent the generating function $f(g(z))$ as the convolution of two trees:
\vspace{0.5cm}

\begin{minipage}{1\textwidth}%
\noindent\begin{minipage}{0.3\textwidth}%
\begin{forest}
[$g_{1}$[$g_{1}g_{1}$[$g_{1}g_{1}g_{1}$][$g_{2}g_{1}$]][$g_{2}$[$g_{1}g_{2}$][$g_{3}$]]]
\end{forest}
\end{minipage}
$*$
\noindent\begin{minipage}{0.3\textwidth}%
\begin{forest}
[$f_{1}$[$f_{2}$[$f_{3}$][$f_{2}$]][$f_{1}$[$f_{2}$][$f_{1}$]]]\end{forest}
\end{minipage}%
$=$
\noindent\begin{minipage}{0.3\textwidth}%
\begin{forest}
[$f_{1}g_{1}$[$f_{2}g_{1}g_{1}$[$f_{3}g_{1}g_{1}g_{1}$][$f_{2}g_{2}g_{1}$]][$f_{1}g_{2}$[$f_{2}g_{1}g_{2}$][$f_{1}g_{3}$]]]\end{forest}
\end{minipage}%
\end{minipage}
\vspace{0.5cm}

The analytic description of this operation is as follows.
\begin{thm}\label{thm:main}
Let $g(z) = \sum_{n \geq 1} g_n z^n $ and $f(z) = \sum_{n\geq 0 }f_n z^n$. We then have the generating function identity
\begin{equation}
\label{eq:f(g(z))}
f(g(z)) = f_0 + \sum_{n \geq 1} z^n \sum_{\pi \in \mathcal{C}_n} f_{\vert \pi \vert} g_{\pi}.
\end{equation}

\end{thm}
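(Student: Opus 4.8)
The plan is to expand $f(g(z))$ directly by substituting the power series for $g$ into the power series for $f$, and then to identify the coefficient of $z^n$ combinatorially. First I would write
\[
f(g(z)) = \sum_{m \geq 0} f_m\, g(z)^m = f_0 + \sum_{m \geq 1} f_m \left( \sum_{k \geq 1} g_k z^k \right)^{\!m},
\]
isolating the $m=0$ term as $f_0$ since $g(0)=0$ guarantees convergence of the composition as a formal power series. The key step is then to expand the $m$-th power: by the definition of the Cauchy product,
\[
\left( \sum_{k \geq 1} g_k z^k \right)^{\!m} = \sum_{k_1, \dots, k_m \geq 1} g_{k_1} \cdots g_{k_m}\, z^{k_1 + \cdots + k_m},
\]
so collecting the coefficient of $z^n$ amounts to summing $g_{k_1}\cdots g_{k_m}$ over all ordered tuples $(k_1,\dots,k_m)$ of positive integers with $k_1 + \cdots + k_m = n$.

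Next I would invoke the central observation that an ordered tuple $(k_1,\dots,k_m)$ of positive integers summing to $n$ is precisely a composition $\pi \in \mathcal{C}(n)$ of length $|\pi| = m$, and that $g_{k_1}\cdots g_{k_m}$ is exactly the multi-index quantity $g_\pi$ defined in the introduction. Substituting this in and exchanging the order of summation, I would obtain
\[
f(g(z)) = f_0 + \sum_{m \geq 1} f_m \sum_{n \geq m} z^n \sum_{\substack{\pi \in \mathcal{C}(n) \\ |\pi| = m}} g_\pi,
\]
where the inner sum is empty unless $n \geq m$. Regrouping by the power $n$ rather than by $m$, and noting that for fixed $n$ the length $m = |\pi|$ ranges over $1 \leq m \leq n$ as $\pi$ runs over all of $\mathcal{C}(n)$, the double sum collapses to $\sum_{n \geq 1} z^n \sum_{\pi \in \mathcal{C}(n)} f_{|\pi|}\, g_\pi$, which is the claimed identity \eqref{eq:f(g(z))}.

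The only genuine obstacle is justifying the rearrangement of the double series, i.e.\ that swapping the order of summation over $m$ and $n$ is legitimate. In the ring of formal power series this is automatic, because for each fixed $n$ only finitely many values of $m$ (namely $1 \leq m \leq n$) contribute a term in $z^n$, so no infinite resummation within a single coefficient is ever performed; the whole argument is a finite manipulation coefficient by coefficient. If one instead works analytically, the same conclusion follows from absolute convergence on a neighborhood of the origin where $|g(z)| < 1$ (or within the radius of convergence of $f$), which permits the unconditional reindexing. Since the statement is most naturally read as an identity of formal power series, I would present it in that setting, where the proof is essentially just a bookkeeping observation that the coefficient extraction in the Cauchy product is indexed by compositions.
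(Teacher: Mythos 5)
Your proof is correct, and it takes a genuinely different route from the paper. The paper does not expand the series at all: it observes that the case $f_0=0$, $f_n=1$ recovers Theorem \ref{thm:5} (since then $f(z)=\frac{z}{1-z}$), and then derives the general identity by citing Vella's composition-indexed form of the Fa\'{a} di Bruno formula, $T_n(f\circ g;a)=\sum_{\pi\in\mathcal{C}_n}T_{|\pi|}(f;g(a))\prod_i[T_i(g;a)]^{\pi_i}$, specialized at $a=0$ where $T_n(f;0)=f_n$ and $T_n(g;0)=g_n$. You instead prove the statement from scratch: expand $f(g(z))=\sum_m f_m g(z)^m$, write $g(z)^m$ as a Cauchy product indexed by ordered $m$-tuples of positive integers, recognize those tuples as compositions of length $m$, and regroup by the exponent $n$ --- with the rearrangement justified because $g(z)^m$ has valuation at least $m$, so each coefficient of $z^n$ involves only the finitely many $m\le n$. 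Your argument is more elementary and self-contained; it needs no external machinery and makes transparent why compositions (ordered tuples) rather than partitions appear, which the paper's citation to Vella leaves implicit. What the paper's route buys is the conceptual link to the Fa\'{a} di Bruno formula and its literature, which is central to the paper's narrative of trees as bookkeeping devices for that formula; your route buys rigor and independence, and arguably would have been the better proof to print, since it also handles the formal-power-series setting explicitly rather than implicitly assuming analyticity at $0$.
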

\begin{proof}
We note that the case $f_0=0$ and $f_n = 1$, $n \geq 1$, corresponds to Theorem \ref{thm:5}. Then $f(z) = \frac{z}{1-z}$, leading to the observed relationship between $g(z)$ and $x(z)$. In terms of trees, the sum $\sum_{\pi \in \mathcal{C}_n} f_{\vert \pi \vert} g_\pi$ corresponds to a row sum, with additional weights depending on how many different parts are in each composition of $n$. With the notation $T_n(f;a) = \frac{f^{(n)}(a)}{n!}$, we have the following result of Vella \cite{Vella}, based off the classical Fa\'{a} di Bruno formula:
\begin{equation}
T_n(f \circ g;a) = \sum_{\pi \in C_n}  T_{\vert \pi \vert}(f;g(a)) \prod_{i=1}^n [T_i(g;a)]^{\pi_i}.
\end{equation}

We let $a=0$, so that $g(a)=0$. Then letting $T_n(f;0) =f_n$ and $T_n(g;0) = g_n$ yields the result.
\end{proof}
\begin{rem}
Eger \cite{Eger} explored the case $f_k=1$ and $f_n=0$ for $n \neq k$. He used this to define 'extended binomial coefficients', which constitute a special case of our results.
\end{rem}
\begin{rem}
One consequence of identity \eqref{eq:f(g(z))} is as follows: let $\{\mu_n\}$ denote a moment sequence and $\{\kappa_n\}$ a cumulant sequence of a random variable $Z$: the moments are the expectations $\mu_n = \mathbb{E} X^n$ and the cumulants are the Taylor coefficients of the logarithm of the moment generating function $\sum_{n \ge 0} \frac{\mu_n}{n!} z^n$. Choosing
\[
g_1\left(z\right)= \sum_{n=1}^{\infty} \mu_n \frac{z^n}{n!},\thinspace\thinspace f\left(z\right)=\log\left(1+z\right)
\]
so that
\[
f\left(g_1\left(z\right)\right)=\sum_{n=1}^{\infty}  \kappa_n \frac{z^n}{n!},
\]
gives the identity between moments and cumulants
\[
\sum_{\pi\in\mathcal{C}\left(n\right)}\frac{\left(-1\right)^{\vert \pi \vert}}{\vert \pi \vert}\frac{\mu_\pi}{\pi!}=\frac{\kappa_n}{n!}
\]
Let $g_2(z) = \sum_{n=1}^{\infty}  \kappa_n \frac{z^n}{n!}$ be the cumulant generating function and $f(z) = e^z-1$, so that $f(g_2(z))$ is the moment generating function. This gives the other identity between moments and cumulants
\[
\sum_{\pi\in\mathcal{C}\left(n\right)}\frac{1}{\vert \pi \vert!}\frac{\kappa_\pi}{\pi!}=\frac{\mu_n}{n!}.
\]
This allows us to express moments and cumulants in terms of sums over compositions of each other. Note that by definition, if $\pi = k_1 \dots k_p$ then $\left(-1\right)^{\pi}=\left(-1\right)^{k_1 +\dots+ k_p}$, $\mu_\pi = \mu_{k_1}\cdots\mu_{k_p}$, $\kappa_\pi = \kappa_{k_1}\cdots\kappa_{k_p}$, $\pi! = k_{1}! \dots k_{p}!$ and $\vert \pi \vert!=p!.$
\end{rem}

\begin{thm}
\label{non linear}
We have the following identity for sums over compositions:
\begin{equation}\label{eq:convolutions}
\sum_{\pi \in \mathcal{C}_n} f_{\vert \pi \vert} g_{\pi} = \sum_{p=1}^{n} f_p\sum_{\underset{k_{i}\ge1}{k_{1}+\dots+k_{p}=n}} g_{k_{1}}\dots g_{k_{p}} = \sum_{p=1}^{n}\left(\sum_{m=p}^{n}f_m \binom{m}{p}\right)\sum_{k_{1}+\dots+k_{p}=n}g_{k_{1}}\dots g_{k_{p}}.
\end{equation}
\end{thm}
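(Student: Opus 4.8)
The plan is to treat the two equalities separately: the left one is a pure reindexing, while the right one rests on a single binomial identity relating strict and weak composition sums.

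For the left equality I would sort the compositions of $n$ according to their number of parts. A composition $\pi \in \mathcal{C}_n$ with $\vert\pi\vert = p$ is exactly a $p$-tuple $(k_1,\dots,k_p)$ of integers $k_i \geq 1$ with $k_1 + \dots + k_p = n$, and the weight $f_{\vert\pi\vert} = f_p$ is constant across this group. Summing over $p$ gives $\sum_{\pi \in \mathcal{C}_n} f_{\vert\pi\vert} g_\pi = \sum_{p=1}^n f_p \sum_{k_i \geq 1,\, \sum k_i = n} g_{k_1}\cdots g_{k_p}$, which is the middle expression; this step needs nothing beyond the definitions.

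For the right equality, write $\tilde{T}_{p,n} = \sum_{k_i \geq 1,\, \sum k_i = n} g_{k_1}\cdots g_{k_p}$ for the strict sum and $T_{p,n} = \sum_{k_i \geq 0,\, \sum k_i = n} g_{k_1}\cdots g_{k_p}$ for the weak sum, where in the latter I adopt the convention $g_0 = -1$ already in force in the convolution identity of Theorem \ref{thm:5} (equivalently $w_0 = 1$ in its proof). The heart of the argument is the relation
\[
\tilde{T}_{p,n} = \sum_{j=1}^p \binom{p}{j} T_{j,n}, \qquad n \geq 1.
\]
I would prove this at the level of ordinary generating functions: since $\sum_n \tilde{T}_{p,n} z^n = g(z)^p$ and $\sum_n T_{p,n} z^n = (g_0 + g(z))^p = (g(z)-1)^p$, expanding $g(z)^p = \bigl((g(z)-1)+1\bigr)^p = \sum_{j=0}^p \binom{p}{j}(g(z)-1)^j$ and reading off the coefficient of $z^n$ yields the claim, the $j=0$ term vanishing because $T_{0,n} = [z^n]\,1 = 0$ for $n \geq 1$. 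The same relation follows combinatorially by splitting a weak composition into its zero and nonzero positions and applying binomial inversion.

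Finally I would substitute this into the middle expression and interchange the order of summation,
\[
\sum_{p=1}^n f_p \tilde{T}_{p,n} = \sum_{p=1}^n f_p \sum_{j=1}^p \binom{p}{j} T_{j,n} = \sum_{j=1}^n \Bigl(\sum_{p=j}^n f_p \binom{p}{j}\Bigr) T_{j,n},
\]
which after relabeling $(j,p) \mapsto (p,m)$ is precisely the right-hand side. The only genuine subtlety, and the step I expect to require the most care, is pinning down the convention $g_0 = -1$ for the weak sums and checking it is the one under which the convolution form of Theorem \ref{thm:5} holds; once that is fixed, the binomial expansion and the interchange of sums are routine. As a consistency check, the special case $f_p \equiv 1$ recovers the passage from \eqref{eq:compositions} to the convolution identity, since then $\sum_{p=j}^n \binom{p}{j} = \binom{n+1}{j+1}$.
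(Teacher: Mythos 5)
Your proof is correct, and its skeleton is the one the paper intends (the paper proves this theorem by simply pointing back to Theorem \ref{thm:5}): group compositions by their number of parts, relate the strict sums ($k_i\ge 1$) to the weak sums ($k_i\ge 0$) through binomial coefficients, and interchange the order of summation to produce the inner factor $\sum_{m=p}^n f_m\binom{m}{p}$. Where you genuinely diverge is in the key lemma and how it is established. The paper substitutes $g_n=-w_n/n!$, rewrites everything as multinomial sums $\tilde{S}_{m,n}$, $S_{m,n}$, and invokes the signed inversion relation $\tilde{S}_{m,n}=\sum_{p=1}^{m}(-1)^{m-p}\binom{m}{p}S_{p,n}$, which it only asserts ``can be proved by induction on $m$'' and which silently requires $w_0=1$. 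You instead stay with the $g_n$ themselves and get the sign-free relation $\tilde{T}_{p,n}=\sum_{j=1}^{p}\binom{p}{j}T_{j,n}$ in one line from the binomial theorem, $g(z)^p=\bigl((g(z)-1)+1\bigr)^p$, after building the convention $g_0=-1$ into the weak sums. This buys three things: a self-contained proof of the step the paper leaves to the reader, elimination of the factorial and multinomial bookkeeping, and an explicit identification of the convention $g_0=-1$, without which the right-hand member of the theorem --- whose statement omits the range $k_i\ge 0$ altogether --- is not even well defined; your observation that this is the same normalization hidden in the paper's proof ($w_0=1$) is exactly right. What the paper's heavier normalization buys in exchange is continuity with the Woon-tree computations, where the $w_n$ are the natural entries. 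Your closing check that $f_p\equiv 1$ collapses the inner weight to $\binom{n+1}{p+1}$ by the hockey-stick identity correctly recovers the convolution form of Theorem \ref{thm:5}.
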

\begin{proof}
The proof is identical to that given in Theorem \ref{thm:5}.
\end{proof}

We can also, for the first time, find a general formula for sums over all parts of all compositions of $n$. This corresponds to an 'additive' Woon tree where the creation operator "I" adds $g_1$ instead of multiplying by it.
\begin{thm}
Let $g(z) = \sum_{n \geq 1} g_n z^n $ and $f(z) = \sum_{n\geq 0 }f_n z^n$. We then have the generating function identity

\begin{equation}
f'\left(\frac{z}{1-z}\right) g(z)=  \sum_{n \geq 1} z^n \sum_{\pi \in \mathcal{C}_n} f_{\vert \pi \vert} \sum_{k_i \in \pi} g_{k_i}.
\end{equation}
\end{thm}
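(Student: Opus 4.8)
The plan is to deduce this ``additive'' identity from the multiplicative one in Theorem \ref{thm:main} by a single differentiation trick, rather than re-running the combinatorial bookkeeping. The key observation is that the inner sum $\sum_{k_i \in \pi} g_{k_i}$ is the linear (first-order) part in an auxiliary parameter $t$ of the product $\prod_{k_i \in \pi}(1 + t\, g_{k_i})$: expanding this product and extracting the coefficient of $t^1$ produces exactly $\sum_{k_i\in\pi} g_{k_i}$, since every omitted factor equals $1$.

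Concretely, I would introduce the perturbed input sequence $G_k = 1 + t\, g_k$ for $k \geq 1$, whose generating function is
\[
G(z) = \sum_{k \geq 1}(1 + t\,g_k)\, z^k = \frac{z}{1-z} + t\, g(z).
\]
Because $G(z)$ has zero constant term, Theorem \ref{thm:main} applies verbatim to the sequence $\{G_k\}$ and gives, as an identity of formal power series in $z$ whose coefficients are polynomials in $t$,
\[
f\!\left(\frac{z}{1-z} + t\, g(z)\right) = f_0 + \sum_{n \geq 1} z^n \sum_{\pi \in \mathcal{C}_n} f_{\vert \pi \vert} \prod_{k_i \in \pi}(1 + t\, g_{k_i}).
\]
Next I would differentiate both sides in $t$ and set $t = 0$. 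On the right, $\frac{d}{dt}\big|_{t=0}\prod_{k_i \in \pi}(1 + t\, g_{k_i}) = \sum_{k_i \in \pi} g_{k_i}$, so the right-hand side becomes $\sum_{n \geq 1} z^n \sum_{\pi \in \mathcal{C}_n} f_{\vert \pi \vert}\sum_{k_i \in \pi} g_{k_i}$. On the left, the chain rule gives $\frac{d}{dt} f(G(z)) = f'(G(z))\,\frac{\partial G}{\partial t} = f'(G(z))\, g(z)$, and evaluating at $t=0$, where $G(z) = \frac{z}{1-z}$, yields $f'\!\left(\frac{z}{1-z}\right) g(z)$. Matching the two sides is the claimed identity.

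The only delicate point, which I would flag as the main technical obstacle, is justifying the termwise differentiation and the chain rule. This is harmless once we work in the ring of formal power series: for each fixed $n$ the coefficient of $z^n$ on both sides is a finite sum over the $2^{n-1}$ compositions of $n$, hence a polynomial in $t$, so $d/dt$ acts coefficientwise and the chain rule reduces to ordinary polynomial calculus; one also notes that $f'(z/(1-z))\,g(z)$ is well-defined since $z/(1-z)$ has no constant term. As an independent sanity check I would verify the identity directly by grouping the right-hand side according to the number of parts $p$: symmetry over the $p$ positions converts $\sum_{k_i\in\pi} g_{k_i}$ into $p\, g_{k_1}$ summed over compositions, whose $z$-generating function is $p\, g(z)\big(z/(1-z)\big)^{p-1}$, and summing $f_p$ times this over $p$ reconstructs $g(z)\sum_{p\geq 1} p f_p\,(z/(1-z))^{p-1} = g(z)\, f'(z/(1-z))$, confirming the result.
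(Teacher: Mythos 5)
Your proof is correct and is essentially the paper's own argument: the paper deforms the input sequence to $\lambda^{g_k}$ and differentiates $f(G(z,\lambda))$ at $\lambda=1$, while you deform it to $1+t\,g_k$ and differentiate at $t=0$; both are one-parameter perturbations of the all-ones sequence whose derivative recovers $g_k$, pushed through Theorem \ref{thm:main} and the chain rule. If anything, your linear perturbation is the formally cleaner variant, since every coefficient is a genuine polynomial in $t$, whereas the paper's $\lambda^{g_n}$ involves non-integer powers of $\lambda$ and so its coefficientwise differentiation is slightly harder to justify formally.
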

\begin{proof}
We begin with $g(z) = \sum_{n \geq 1} g_n z^n $ and transform it to the bivariate generating function $G(z, \lambda) =  \sum_{n \geq 1} \lambda^{g_n} z^n$. We then take a partial derivative with respect to $\lambda$ and evaluate at $\lambda=1$, so
\begin{equation}
\frac{\partial}{\partial \lambda}  f(G(z,\lambda)) \Bigr|_{\lambda=1}= \frac{\partial}{\partial \lambda}   \sum_{n \geq 1} z^n \sum_{\pi \in \mathcal{C}_n} f_{\vert \pi \vert} \lambda^{ \sum_{k_i \in \pi} g_{k_i}}  \Bigr|_{\lambda=1}.
\end{equation}
Noting that $G(z,1) =\frac{z}{1-z}$ and $\frac{\partial}{\partial \lambda}  G(z,\lambda) |_{\lambda=1} = g(z)$ yields the theorem.
\end{proof}

\begin{rem}
As a consequence of this result, choosing $f\left(z\right) = \frac{z}{1-z},$ we deduce the generating function of the sequence 
\[
\sum_{\pi \in \mathcal{C}_n}\sum_{k_i \in \pi} g_{k_i}
\]
of a sequence $\{g_n\}$
as
\[
\sum_{n \geq 1} z^n \sum_{\pi \in \mathcal{C}_n} \sum_{k_i \in \pi} g_{k_i}.
= \left(\frac{1-z}{1-2z}\right)^2 g\left(z\right).
\]
Generating functions for other sequences related to compositions, such as the total number of summands in all the compositions of $n,$ can be found for example in \cite{Merlini}.
\end{rem}

\subsection{Back to the Bernoulli numbers}

Going back to Example \ref{ex:Bernoulli} and Equation \eqref{eq:convolutions}, we deduce the expression
\[
\frac{B_{n}}{n!}=\sum_{\pi\in\mathcal{C}\left(n\right)}\frac{\left(-1\right)^{\vert \pi \vert}}{\left(\pi+1\right)!}= \sum_{p=1}^{n}\binom{n+1}{p+1}\left(-1\right)^{p}\sum_{k_{1}+\dots+k_{p}=n}\frac{1}{\left(k_{1}+1\right)!\dots\left(k_{p}+1\right)!}
\]
with the notation
\[
\left(\pi+1\right)!=\prod_{i}\left(k_{i}+1\right)!.
\]
From the multinomial identity, we know that, in terms of Stirling numbers of the second kind,
\begin{equation}
\label{Stirling}
\sum_{k_{1}+\dots+k_{p}=n}\binom{n}{k_{1},\dots,k_{p}}\frac{1}{\left(k_{1}+1\right)\dots\left(k_{p}+1\right)} =\frac{n!p!}{\left(n+p\right)!}\left\{ \begin{array}{c}
n+p\\
p
\end{array}\right\} .
\end{equation}
We deduce the expression
\[
B_{n}=\sum_{p=1}^{n}\left(-1\right)^{p}\frac{\left\{ \begin{array}{c}
n+p\\
p
\end{array}\right\} }{\binom{n+p}{p}}\binom{n+1}{p+1},\,\,n \ge 1.
\]
Another expression for the Bernoulli numbers can be obtained by choosing
\[
g\left(z\right)=e^{z}-1=\sum_{n\ge1}\frac{z^{n}}{n!}
\]
and 
\[
f\left(z\right)=\frac{\log\left(1+z\right)}{z}=\sum_{n\ge0}\frac{\left(-1\right)^{n}}{n+1}z^{n}
\]
giving
\begin{eqnarray}
\frac{B_{n}}{n!} &=&\sum_{\pi\in C\left(n\right)}\frac{\left(-1\right)^{\vert \pi \vert}}{\vert \pi \vert+1}\frac{1}{\pi!}\\
&=&\sum_{p=1}^{n}\frac{\left(-1\right)^p}{p+1}\sum_{\underset{k_{i}\ge1}{k_{1}+\dots+k_{p}=n}}\frac{1}{k_{1}!\dots k_{p}!} = \sum_{p=1}^{n}\frac{\left(-1\right)^p}{p+1}p!
\left\{ 
\begin{array}{c}
n\\p
\end{array}
\right\}
\end{eqnarray}
which can be found under a slightly different form as \cite[Entry 24.6.9]{NIST}.

We note that we have two distinct representations of the Bernoulli numbers as sums over compositions:
\[
\frac{B_{n}}{n!} =\sum_{\pi\in\mathcal{C}\left(n\right)}\frac{\left(-1\right)^{\vert \pi \vert}}{\left(\pi+1\right)!} =\sum_{\pi\in C\left(n\right)}\frac{\left(-1\right)^{\vert \pi \vert}}{\vert \pi \vert+1}\frac{1}{\pi!}.
\]
We also note that in \cite{Vella},
D.C. Vella obtains other expressions of the Bernoulli and Euler numbers
as sums over compositions.

\subsection{Back to Bernoulli polynomials}

We can now provide a general formula for the Bernoulli polynomials
as follows:
\begin{thm}
\label{Bernoulli expansion}
The Bernoulli polynomials can be expanded as
\[
B_{n}\left(x\right)=\sum_{p=1}^{n}\binom{n+1}{p+1}\left(-1\right)^{p}B_{n}^{\left(-p\right)}\left(-px\right),\thinspace\thinspace n\ge1.
\]
where $B_{n}^{\left(p\right)}\left(x\right)$ is the higher-order Bernoulli polynomial with parameter $p$ with generating function 
\begin{equation}
\label{gf generalized Bernoulli}
\sum_{n\ge0}\frac{B_{n}^{\left(p\right)}\left(x\right)}{n!}z^{n}=e^{zx}\left(\frac{z}{e^{z}-1}\right)^{p}.
\end{equation}

\end{thm}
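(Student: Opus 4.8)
The plan is to specialize the convolution form of Fuchs' result to the entries that generate the Bernoulli polynomials, and then to repackage the resulting convolution as the $p$-th power of a single generating function, which I will recognize as that of a higher-order Bernoulli polynomial with negative parameter.

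First I would recall from the Bernoulli-polynomial example above that the input sequence
\[
g_{n}=\frac{\left(-1\right)^{n+1}}{\left(n+1\right)!}\left[x^{n+1}-\left(x-1\right)^{n+1}\right]
\]
produces the row sums $x_{n}=B_{n}\left(x\right)/n!$. Evaluating this same formula at $n=0$ gives $g_{0}=-1$, which is exactly the value of the constant term implicit in the convolution identity \eqref{eq:convolutions} of Theorem \ref{thm:5} (where the parts $k_i$ are allowed to vanish). I would then sum the full series $\hat{g}\left(z\right)=\sum_{k\ge0}g_{k}z^{k}$: splitting the two powers, shifting the index $m=k+1$, and summing the resulting exponential series gives, after factoring $e^{(1-x)z}-e^{-xz}=e^{-xz}\left(e^{z}-1\right)$, the closed form
\[
\hat{g}\left(z\right)=-e^{-xz}\,\frac{e^{z}-1}{z}.
\]

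Next I would apply the convolution form of Fuchs' result in Theorem \ref{thm:5} (whose weight $\binom{n+1}{p+1}$ arises from the hockey-stick identity $\sum_{m=p}^{n}\binom{m}{p}=\binom{n+1}{p+1}$) to write
\[
\frac{B_{n}\left(x\right)}{n!}=\sum_{p=1}^{n}\binom{n+1}{p+1}\sum_{\underset{k_{i}\ge0}{k_{1}+\dots+k_{p}=n}}g_{k_{1}}\dots g_{k_{p}}=\sum_{p=1}^{n}\binom{n+1}{p+1}\left[z^{n}\right]\hat{g}\left(z\right)^{p},
\]
since the inner sum over parts $\ge0$ is precisely the coefficient of $z^{n}$ in $\hat{g}\left(z\right)^{p}$. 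Raising the closed form above to the $p$-th power yields
\[
\hat{g}\left(z\right)^{p}=\left(-1\right)^{p}e^{-pxz}\left(\frac{z}{e^{z}-1}\right)^{-p},
\]
which, by the generating function \eqref{gf generalized Bernoulli} read with parameter $-p$ and argument $-px$, has coefficients $\left[z^{n}\right]\hat{g}\left(z\right)^{p}=\left(-1\right)^{p}B_{n}^{\left(-p\right)}\left(-px\right)/n!$. Substituting this into the previous display and clearing the common factor $n!$ gives the claimed expansion.

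The two generating-function computations are routine; the step I expect to require the most care is the bookkeeping around the constant term $g_0$, namely verifying that the value $g_{0}=-1$ produced by the polynomial formula at $n=0$ coincides with the convention built into the convolution identity \eqref{eq:convolutions}, so that the passage from the composition sum (parts $\ge1$) to the honest power $\hat{g}(z)^{p}$ (parts $\ge0$) is legitimate. Correctly matching the sign $(-1)^p$ and the shifted argument $-px$ against the negative-order generating function is the other place where an error could easily slip in.
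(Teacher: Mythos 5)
Your proof is correct, and it follows the paper's strategy almost exactly: specialize the weighted convolution form of Theorem \ref{thm:5} (parts $k_i\ge 0$, weight $\binom{n+1}{p+1}$) to the Bernoulli-polynomial input sequence, evaluate the inner convolution in closed form as $\left(-1\right)^{p}e^{-pxz}\left(\frac{e^{z}-1}{z}\right)^{p}$, and match this against \eqref{gf generalized Bernoulli} read with parameter $-p$ and argument $-px$. The only difference lies in how that middle closed form is obtained. The paper rewrites $g_{n}=\frac{\left(-1\right)^{n+1}}{n!}\int_{0}^{1}\left(x+u-1\right)^{n}du$, so that the multinomial theorem collapses the $p$-fold convolution into a single $p$-dimensional integral of $\left(px-p+u_{1}+\dots+u_{p}\right)^{n}$, whose exponential generating function is then computed; you instead sum the series $\hat{g}\left(z\right)=\sum_{k\ge0}g_{k}z^{k}=-e^{-xz}\frac{e^{z}-1}{z}$ directly and read the convolution off as $\left[z^{n}\right]\hat{g}\left(z\right)^{p}$. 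Your route is slightly more economical, trading the integral bookkeeping for a one-line power of a closed-form series. One point in your favor: you make explicit the convention $g_{0}=-1$ needed for the parts-$\ge 0$ convolution to be meaningful, which the paper leaves implicit (it is hidden in the proof of Theorem \ref{thm:5}, where the binomial inversion between $\tilde{S}_{m,n}$ and $S_{p,n}$ requires $w_{0}=1$, i.e. $g_{0}=-1$); verifying that the polynomial formula for $g_{n}$ evaluates to exactly this value at $n=0$ is precisely the check that legitimizes the step, and the paper's integral representation satisfies it for the same reason.
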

\begin{proof}
Using
\[
g_{n}=\frac{\left(-1\right)^{n+1}}{\left(n+1\right)!}\left(x^{n+1}-\left(x-1\right)^{n+1}\right)=\frac{\left(-1\right)^{n+1}}{n!}\int_{0}^{1}\left(x+u-1\right)^{n}du,
\]
we deduce
\[
x_{n}=\frac{B_{n}\left(x\right)}{n!}=\sum_{p=1}^{n}\binom{n+1}{p+1}\sum_{k_{1}+\dots+k_{p}=n}g_{k_{1}}\dots g_{k_{p}}
\]
with
\begin{align*}
\sum_{k_{1}+\dots+k_{p}=n}g_{k_{1}}\dots g_{k_{p}} & =\frac{\left(-1\right)^{n+p}}{n!}\sum_{k_{1}+\dots+k_{p}=n}\binom{n}{k_{1},\dots,k_{p}}\int_{0}^{1}\dots\int_{0}^{1}\prod_{i=1}^{p}\left(x+u_{i}-1\right)^{k_{i}}du_{1}\dots du_{p}\\
 & =\frac{\left(-1\right)^{n+p}}{n!}\int_{0}^{1}\dots\int_{0}^{1}\left(px-p+u_{1}+\dots+u_{p}\right)^{n}du_{1}\dots du_{p}.
\end{align*}
This sequence has generating function
\begin{align*}
\sum_{n\ge0}\left(\frac{\left(-1\right)^{n+p}}{n!}\int_{0}^{1}\dots\int_{0}^{1}\left(px-p+u_{1}+\dots+u_{p}\right)^{n}du_{1}\dots du_{p}\right)z^{n} & =\left(-1\right)^{p}e^{-z\left(px-p\right)}\left(\frac{1-e^{-z}}{z}\right)^{p}\\
 & =\left(-1\right)^{p}e^{-zpx}\left(\frac{e^{z}-1}{z}\right)^{p};
\end{align*}
comparing to the generating function of the higher-order Bernoulli
polynomials \eqref{gf generalized Bernoulli},
we deduce
\[
\sum_{k_{1}+\dots+k_{p}=n}g_{k_{1}}\dots g_{k_{p}}=\frac{\left(-1\right)^{p}}{n!}B_{n}^{\left(-p\right)}\left(-px\right)
\]
and
\[
B_{n}\left(x\right)=\sum_{p=1}^{n}\binom{n+1}{p+1}\left(-1\right)^{p}B_{n}^{\left(-p\right)}\left(-px\right)
\]
which is the desired result.
\end{proof}

\subsection{Back to higher-order Bernoulli polynomials}

We apply the result of Thm. \ref{thm:main} to higher-order Bernoulli numbers as follows: starting
from
\[
g_{n}=-\frac{1}{\left(n+1\right)!},\thinspace\thinspace g\left(z\right)=1-\frac{e^{z}-1}{z},
\]
the desired generating function is
\[
f\left(g\left(z\right)\right)=\left(\frac{z}{e^{z}-1}\right)^{q}-1=\left(\frac{1}{1-g\left(z\right)}\right)^{q}-1
\]
so that
\[
f\left(z\right)=\left(\frac{1}{1-z}\right)^{p}-1=\sum_{k\ge1}\binom{k+p-1}{p-1}z^{k}.
\]
Applying formula \eqref{eq:convolutions}, we deduce
\[
\frac{B_{n}^{\left(q\right)}}{n!}=\sum_{p=1}^{n}\left(\sum_{m=p}^{n}\binom{m}{p}\binom{m+q-1}{q-1}\right)\sum_{k_{1}+\dots+k_{p}=n}\frac{\left(-1\right)^{p}}{\left(k_{1}+1\right)!\dots\left(k_{p}+1\right)!}.
\]
The inner sum is easily computed as
\[
\sum_{m=p}^{n}\binom{m}{p}\binom{m+q-1}{q-1}=\frac{n-p+1}{p+q}\binom{n+1}{p}\binom{n+q}{q-1}
\]
whereas the last sum is, by \eqref{Stirling}, equal to
\[
\sum_{k_{1}+\dots+k_{p}=n}\frac{\left(-1\right)^{p}}{\left(k_{1}+1\right)!\dots\left(k_{p}+1\right)!}=\left(-1\right)^{p}\frac{p!}{\left(n+p\right)!}\left\{ \begin{array}{c}
n+p\\
p
\end{array}\right\} 
\]
so that
\begin{align*}
B_{n}^{\left(q\right)} & =\binom{n+q}{q-1}\sum_{p=1}^{n}\left(-1\right)^{p}\frac{\left\{ \begin{array}{c}
n+p\\
p
\end{array}\right\} }{\binom{n+p}{p}}\left(\frac{n+1}{p+q}\right)\binom{n}{p}\\
 & =\sum_{p=1}^{n}\left(-1\right)^{p}\frac{\left\{ \begin{array}{c}
n+p\\
p
\end{array}\right\} }{\binom{n+p}{p}}\binom{n+q}{n-k}\binom{q+k-1}{k}.
\end{align*}
This identity can be found as \cite[Eq.(15)]{Srivastava}.

\subsection{Compositions with restricted summands}

When the input sequence $\{ g_{n} \}$ has a simple structure, more details can be obtained about the row-sum sequence $\{x_{n}\}.$ 
We start with the example of Fibonacci numbers,
by explicitly constructing Woon's tree for the Fibonacci case
\[
x_{n}=F_{n}
\]
with 
\[
F_{0}=1,\thinspace\thinspace F_{1}=1,\thinspace\thinspace F_{2}=2,\thinspace\thinspace F_{3}=3,\thinspace\thinspace F_{4}=5,\thinspace\thinspace F_{5}=8\dots
\]
Since the Fibonacci numbers satisfy the recurrence
\[
F_{n}=F_{n-1}+F_{n-2},\,\,n \ge 2,
\]
we deduce from (\ref{restricted}) that the sequence of entries
of the tree satisfies
\[
g_{k}=\begin{cases}
1 & k=1,2\\
0 & \text{else}
\end{cases}.
\]
The corresponding Woon's tree starts as 
\begin{center}
\begin{forest}
[$1$[$1$[$1$[$1$][$1$]][$1$[$1$][$0$]]][$1$[$1$[$1$][$1$]][$0$[$0$][$0$]]]]
\end{forest}
\par\end{center}
\begin{flushleft}
We deduce the expression for the Fibonacci numbers as
\[F_{n}=\mathcal{C}_{n}^{\left\{ 1,2\right\}}=\#\left\{ \left(k_{1},\dots,k_{m}\right)\vert k_{1}+\dots+k_{m}=n,\thinspace\thinspace k_{i}=1,2\right\},
\] 
the number of compositions of $n$ with parts equal to $1$ or $2$, which can be found in \cite[p.42]{Flajolet} and \cite{Alladi}. Moreover, we recover the generating function $\frac{z+z^2}{1-z-z^2} = \sum_{n\geq 1} F_n z^n$.
\par\end{flushleft}

This result can be generalized remarking that we have a correspondence between linear recurrences and compositions into restricted parts. Fix a set $J$ of integers: we adopt the notation 
\[
\mathcal{C}_{n}^{\left\{J\right\} }=\#\left\{ \left(k_{1},\dots,k_{m}\right)\vert k_{1}+\dots+k_{m}=n,\thinspace\thinspace k_{i}\in J\right\}
\]
from \cite[p.42]{Flajolet} to denote the number of compositions of $n$ with parts in the set $J$.

\begin{thm}\label{restricted}
Let $J $ be a finite set of positive indices and consider the linear recurrence $x_n  =  \sum_{j \in J} x_{n-j}$. Then we have the dual identities:
\begin{equation}\label{eq:comp}
x_n  = \mathcal{C}_{n}^{\left\{J\right\} },
\end{equation}
\begin{equation}\label{eq:compgen}
\sum_{n \geq 1} x_n z^n  =\frac{\sum_{j \in J}z^j}{1-\sum_{j \in J}z^j},
\end{equation}
Furthermore, the corresponding Woon tree has input sequence
\begin{equation}\label{eq:comptree}
g_{k}=\begin{cases}
1 & k \in J\\
0 & \text{else}
\end{cases}.
\end{equation}
\end{thm}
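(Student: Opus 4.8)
The plan is to recognize all three assertions as specializations of the tree/composition dictionary already established in Theorem \ref{thm:5} and in Fuchs' recursion \eqref{eq:recursion}, applied to the indicator input \eqref{eq:comptree}. The one genuinely new point is to pin down the (unstated) initial data of the recurrence: I would read $x_n=\sum_{j\in J}x_{n-j}$ with the Woon-tree normalization $x_0=1$ and $x_n=0$ for $n<0$, so that the recurrence is well posed and determines $\{x_n\}_{n\ge1}$ uniquely.

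First I would establish \eqref{eq:comptree} by matching recurrences. Setting $g_k=1$ for $k\in J$ and $g_k=0$ otherwise, Fuchs' convolution \eqref{eq:recursion} becomes $x_n=\sum_{j=1}^n g_j x_{n-j}=\sum_{j\in J,\,j\le n}x_{n-j}=\sum_{j\in J}x_{n-j}$, which is exactly the prescribed recurrence (the terms with $j>n$ drop out because $x_{n-j}=0$). Since both sequences satisfy the same recurrence and the same initial condition $x_0=1$, they coincide; hence the Woon tree with input \eqref{eq:comptree} generates the solution of the recurrence.

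With the input fixed, the generating function \eqref{eq:compgen} is immediate: the input generating function is $g(z)=\sum_{k\ge1}g_k z^k=\sum_{j\in J}z^j$, and the relation $x(z)=g(z)/\bigl(1-g(z)\bigr)$ for Fuchs' tree gives precisely $\sum_{n\ge1}x_nz^n=\sum_{j\in J}z^j\big/\bigl(1-\sum_{j\in J}z^j\bigr)$. For the composition count \eqref{eq:comp} I would invoke the sum-over-compositions formula \eqref{eq:compositions}, namely $x_n=\sum_{\pi\in\mathcal C(n)}g_\pi$, where $g_\pi=g_{k_1}\cdots g_{k_m}$ equals $1$ when every part $k_i$ lies in $J$ and vanishes as soon as a single part lies outside $J$. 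Thus the sum counts exactly the compositions of $n$ all of whose parts belong to $J$, that is, $x_n=\mathcal C_n^{\{J\}}$.

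Since this is essentially a dictionary translation, I do not expect a serious obstacle; the only point demanding care is the bookkeeping of initial and boundary values. Concretely, one must check that the combinatorial count is consistent with the normalization $x_0=1$ --- which holds because the empty composition is the unique composition of $0$ --- and that negative-index terms vanish, so that the recurrence, the generating function, and the count all genuinely refer to the same sequence. As a transparent cross-check of the \emph{duality}, I would note that \eqref{eq:comp} also follows directly and combinatorially: splitting off the first part $j\in J$ of a composition of $n$ shows $\mathcal C_n^{\{J\}}=\sum_{j\in J}\mathcal C_{n-j}^{\{J\}}$ with $\mathcal C_0^{\{J\}}=1$, reproducing the recurrence without passing through the tree.
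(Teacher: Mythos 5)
Your proposal is correct and follows essentially the same route as the paper: identify the indicator input sequence by matching the recurrence against Fuchs' convolution \eqref{eq:recursion}, obtain \eqref{eq:compgen} from $x(z)=g(z)/(1-g(z))$, and obtain \eqref{eq:comp} from the sum-over-compositions formula, since $g_\pi$ is the indicator that all parts lie in $J$. Your explicit handling of the initial data ($x_0=1$, vanishing negative-index terms) and the combinatorial cross-check are welcome refinements the paper leaves implicit, but they do not change the argument.
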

\begin{proof}
We begin with the recurrence \eqref{eq:recursion}, $x_n = \sum_{j=1}^n g_j x_{n-j}$, where $x_n$ corresponds to the $n-$th row sum of Woon's tree. Since $x_n  =  \sum_{j\in J} x_{n-j}$, the set $J$ consists of the indices $i$ such that $g_i =1$. We also have that $g_i=0$ everywhere else. This sequence then satisfies the conditions to be considered a generalized Woon tree, so we have the realization \eqref{eq:comptree}. We can then apply the transform $x(z) = \frac{g(z)}{1-g(z)}$, which yields \eqref{eq:compgen}, since $g(z)=\sum_{n \geq 1}g_n z^n = \sum_{j \in J}z^j$. Finally, we can apply \eqref{eq:convolutions} with $f_0 =0$ and $f_n=1$ to yield 
\begin{equation*}
\frac{g(z)}{1-g(z)} =  \sum_{n \geq 1} z^n \sum_{\underset{k_{i}\ge1,\thinspace m\ge1}{k_{1}+\dots+k_{m}=n}} g_{k_{1}}\dots g_{k_{m}} =   \sum_{n \geq 1} z^n \sum_{\underset{k_{i}\ge1,\thinspace k_i \in J,\thinspace m\ge1}{k_{1}+\dots+k_{m}=n}} 1 = \sum_{n \geq 1} \mathcal{C}_{n}^{\left\{J\right\} }z^n. 
\end{equation*}
Comparing coefficients yields \eqref{eq:comp}.
\end{proof}

A direction for future research is to study the asymptotics of $x_n = \sum x_{j_i}$ which would allow us to obtain asymptotics for $\mathcal{C}_{n}^{\left\{J\right\} }$. The asymptotics of $x_n = \sum g_i x_{n-i}$ are complicated, but we have a much easier problem since the coefficients are either zero or one. Finding the asymptotics of $x_n = \sum g_i x_{n-i}$ reduces to studying the roots of the characteristic polynomial $x^n - \sum {x^{j_i}}$.

\subsection{Sum of digits}
For a given $n,$ the set of $2^{n-1}$ integers
$
\left\{ \vert \pi \vert\right\} _{\pi\in\mathcal{C}\left(n\right)}
$
coincides with the set
$
\left\{ 1+s_{2}\left(k\right)\right\} _{0\le k\le2^{n-1}-1},
$
where $s_{2}\left(k\right)$ is the number of $1'$s in the binary
expansion of $k.$ 

For example, for $n=3,$
\[
\left\{ \vert \pi \vert\right\} _{\pi\in\mathcal{C}\left(3\right)}=\left\{ 1,2,2,3\right\} 
\]
while
\[
s_{2}\left(0\right)=0,\thinspace\thinspace s_{2}\left(1\right)=1,\thinspace\thinspace s_{2}\left(2\right)=1,\thinspace\thinspace s_{2}\left(3\right)=2.
\]

This remark, together with Theorem \ref{non linear}, can be used to derive some interesting finite sums that involve the sequence $s_{2}\left(k\right).$
For example, the choice
\[
f\left(z\right)=\log\left(1+z\right),\thinspace\thinspace g\left(z\right)=\frac{z}{1-z}
\]
gives
\[
f\left(g\left(z\right)\right)=-\log\left(1-z\right)
\]
so that 
\[
x_{n}=\frac{1}{n}=\sum_{k=0}^{2^{n-1}-1}\frac{\left(-1\right)^{s_{2}\left(k\right)}}{s_{2}\left(k\right)+1},\thinspace\thinspace n\ge1
\]
This extends naturally as follows:
\begin{thm}
For $f\left(z\right) = \sum_{k \ge 1} f_k z^k,$ we have
\[
\sum_{k=0}^{2^{n-1}-1} f_{s_2\left(k\right)+1} = \left[z^n\right] f\left(\frac{z}{1-z}\right) = \sum_{k=1}^{n} f_k \binom{n-1}{n-k}.
\]
\end{thm}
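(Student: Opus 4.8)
The plan is to read the asserted identity as a chain of three equal quantities and to establish the two linking equalities separately: the passage from the binary-digit sum to $[z^n]\,f(z/(1-z))$ rests on the digit-sum remark together with Theorem \ref{thm:main}, while the passage from $[z^n]\,f(z/(1-z))$ to the explicit binomial sum is a direct power-series expansion. This is exactly the route already taken in the worked case $f(z)=\log(1+z)$ preceding the statement, so the theorem is its natural generalization.

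First I would invoke the identification of multisets $\{\vert\pi\vert\}_{\pi\in\mathcal{C}(n)}=\{1+s_2(k)\}_{0\le k\le 2^{n-1}-1}$ stated just above. Since the summand $f_{s_2(k)+1}$ depends on $k$ only through $s_2(k)$, reindexing the sum over $k$ by the corresponding compositions gives
\[
\sum_{k=0}^{2^{n-1}-1} f_{s_2(k)+1} = \sum_{\pi\in\mathcal{C}(n)} f_{\vert\pi\vert}.
\]
Next I would specialize Theorem \ref{thm:main} to the constant input sequence $g_n=1$ for all $n\ge 1$, whose generating function is $g(z)=z/(1-z)$. Every product $g_\pi$ then equals $1$, so identity \eqref{eq:f(g(z))} collapses to $f(z/(1-z)) = f_0 + \sum_{n\ge 1} z^n \sum_{\pi\in\mathcal{C}_n} f_{\vert\pi\vert}$, whence $[z^n]\,f(z/(1-z)) = \sum_{\pi\in\mathcal{C}_n} f_{\vert\pi\vert}$ for every $n\ge 1$. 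Combining the two displays yields the first two equalities of the theorem.

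Finally, for the closed form I would expand $f(z/(1-z)) = \sum_{k\ge 1} f_k\,(z/(1-z))^k$ and extract the coefficient of $z^n$ termwise. Writing $(z/(1-z))^k = z^k (1-z)^{-k}$ and applying the negative binomial series $(1-z)^{-k}=\sum_{j\ge 0}\binom{k+j-1}{j}z^j$, the coefficient of $z^n$ in the $k$-th term arises from $j=n-k$ and equals $\binom{n-1}{n-k}$, which vanishes unless $1\le k\le n$; summing over $k$ gives $\sum_{k=1}^{n} f_k\binom{n-1}{n-k}$. I do not anticipate a genuine obstacle here, as each step is routine; the only point requiring minor care is the bookkeeping in this last step, namely tracking that the binomial coefficient contributes only for $k\le n$ so that the sum terminates at $n$.
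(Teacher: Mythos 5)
Your proof is correct and follows essentially the same route as the paper: the paper's own justification is the multiset identification $\{\vert\pi\vert\}_{\pi\in\mathcal{C}(n)}=\{1+s_2(k)\}_{0\le k\le 2^{n-1}-1}$ stated just above the theorem, the specialization $g(z)=z/(1-z)$ already used in the worked $\log(1+z)$ example, and the counting fact that $s_2(k)$ takes the value $j$ exactly $\binom{n-1}{j}$ times (equivalently, your negative binomial expansion). If anything, your write-up is more complete than the paper's one-sentence remark, which leaves both linking equalities implicit.
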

This formula simply expresses the fact that for $0 \le k \le 2^{n-1}-1,$ the sequence $\left\{s_2\left(k\right)\right\}$ takes on the value zero $\binom{n-1}{1}$ times, the value one $\binom{n-2}{2}$ times, and so on. 

\section{A general formula for composition of functions}\label{sec4}
We conclude this study with a general formula for composition of functions.
Introduce the notation 
\[
\pi\models n
\]
for $\pi\in\mathcal{C}\left(n\right).$ This mirrors the notation $\lambda \vdash  n$, which states that $\lambda$ is a partition of $n$. From Thm. \ref{thm:main}, we have the formula
\[
\left(f\circ g\right)_{n}=\sum_{\pi\models n}f_{\vert \pi \vert}g_{\pi}.
\]
Now we look at 
\[
\left(f\circ\left(g\circ h\right)\right)_{n}=\sum_{\pi\models n}f_{\vert \pi \vert}\left(g\circ h\right)_{\pi}
\]
where 
\begin{align*}
\left(g\circ h\right)_{\pi} & =\left(g\circ h\right)_{\pi_{1}}\dots\left(g\circ h\right)_{\pi_{p}}\\
 & =\left(\sum_{\mu_{1}\models\pi_{1}}g_{l\left(\mu_{1}\right)}h_{\mu_{1}}\right)\dots\left(\sum_{\mu_{p}\models\pi_{p}}g_{l\left(\mu_{p}\right)}h_{\mu_{p}}\right)\\
 & =\sum_{\mu_{1}\models\pi_{1}}\dots\sum_{\mu_{p}\models\pi_{p}}g_{l\left(\mu_{1}\right)}\dots g_{l\left(\mu_{p}\right)}h_{\mu_{1}}\dots h_{\mu_{p}}.
\end{align*}
We introduce the new notation for a ``composition of compositions''
\[
\sum_{\mu\models\pi}x_{\mu}=\sum_{\mu_{1}\models\pi_{1}}\dots\sum_{\mu_{p}\models\pi_{p}}x_{\mu_{1}}\dots x_{\mu_{p}}
\]
so that we can write 
\[
\left(f\circ\left(g\circ h\right)\right)_{n}=\sum_{\mu\models\pi\models n}f_{\vert \pi \vert}g_{\vert \mu \vert}h_{\mu}
\]
Moreover, the associativity of function compositions translates into
another equivalent expression,
\begin{align*}
\left(\left(f\circ g\right)\circ h\right)_{n} & =\sum_{\substack{\pi \models n \\ \mu\models \vert \pi \vert}}f_{\vert \mu \vert}g_{\mu}h_{\pi}.
\end{align*}
For $n=4$ we have the 5 equivalent possibilities
\begin{eqnarray*}
f^{(1)}\circ f^{(2)}\circ f^{(3)}\circ f^{\left(4\right)} & =f^{\left(1\right)}\circ\left(f^{\left(2\right)}\circ\left(f^{\left(3\right)}\circ f^{\left(4\right)}\right)\right)=\sum_{\substack{\pi\models n\\
\mu\models\pi\\
\lambda\models\mu
}
}f_{\vert \pi \vert}^{\left(1\right)}f_{\vert \mu \vert}^{\left(2\right)}f_{\vert \lambda \vert}^{\left(3\right)}f_{\lambda}^{\left(4\right)}\\
& =\left(\left(f^{\left(1\right)}\circ f^{\left(2\right)}\right)\circ f^{\left(3\right)}\right)\circ f^{\left(4\right)}=\sum_{\substack{\pi\models n\\
\mu\models \vert \pi \vert\\
\lambda\models \vert \mu \vert
}
}f_{\vert \lambda \vert}^{\left(1\right)}f_{\lambda}^{\left(2\right)}f_{\mu}^{\left(3\right)}f_{\pi}^{\left(4\right)}\\
 & =\left(f^{\left(1\right)}\circ f^{\left(2\right)}\right)\circ\left(f^{\left(3\right)}\circ f^{\left(4\right)}\right)=\sum_{\substack{\pi\models n\\
\mu\models \vert \pi \vert\\
\lambda\models\pi
}
}f_{\vert \mu \vert}^{\left(1\right)}f_{\mu}^{\left(2\right)}f_{\vert \lambda \vert}^{\left(3\right)}f_{\lambda}^{\left(4\right)}\\
 & =f^{\left(1\right)}\circ\left(\left(f^{\left(2\right)}\circ f^{\left(3\right)}\right)\circ f^{\left(4\right)}\right)=\sum_{\substack{\pi\models n\\
\mu\models\pi\\
\lambda\models \vert \mu \vert
}
}f_{\vert \pi \vert}^{\left(1\right)}f_{\vert \lambda \vert}^{\left(2\right)}f_{\lambda}^{\left(3\right)}f_{\mu}^{\left(4\right)}\\
 & =\left(f^{\left(1\right)}\circ\left(f^{\left(2\right)}\circ f^{\left(3\right)}\right)\right)\circ f^{\left(4\right)}=\sum_{\substack{\pi\models n\\
\mu\models \vert \pi \vert\\
\lambda\models\mu
}
}f_{\vert \mu \vert}^{\left(1\right)}f_{\vert \lambda \vert}^{\left(2\right)}f_{\lambda}^{\left(3\right)}f_{\pi}^{\left(4\right)}.
\end{eqnarray*}
with the corresponding tree representations 

\vspace{1cm}
\noindent\begin{minipage}[t]{1\columnwidth}%
\begin{minipage}[t]{0.4\columnwidth}%
\begin{forest}calign=fixed edge angles, 
[$f^{\left(1\right)}  \circ  \left( f^{\left(2\right)} \circ \left( f^{\left(3\right)} \circ f^{\left(4\right)} \right) \right)$[$f^{\left(1\right)}$,  edge label={node[midway,left,font=\scriptsize]{$\vert \pi \vert$}}][$f^{\left(2\right)}  \circ  \left( f^{\left(3\right)} \circ f^{\left(4\right)} \right)$,  edge label={node[midway,right,font=\scriptsize]{$\pi$}}[$f^{\left(2\right)}$,  edge label={node[midway,left,font=\scriptsize]{$\vert \mu \vert$}}][$f^{\left(3\right)} \circ f^{\left(4\right)}$,  edge label={node[midway,right,font=\scriptsize]{$\mu$}}[$f^{\left(3\right)}$,  edge label={node[midway,left,font=\scriptsize]{$\vert \lambda \vert$}}][$f^{\left(4\right)}$,  edge label={node[midway,right,font=\scriptsize]{$\lambda$}}]]]]
\end{forest} 
\end{minipage}
\begin{minipage}[t]{0.4\columnwidth}%
\begin{forest}calign=fixed edge angles, [$\left(\left(f^{\left(1\right)}\circ f^{\left(2\right)}\right)\circ f^{\left(3\right)}\right)\circ f^{\left(4\right)}$[$\left(f^{\left(1\right)}\circ f^{\left(2\right)}\right)\circ f^{\left(3\right)}$,  edge label={node[midway,left,font=\scriptsize]{$\vert \pi \vert$}}[$f^{\left(1\right)}\circ f^{\left(2\right)}$,  edge label={node[midway,left,font=\scriptsize]{$\vert \mu \vert$}}[$f^{\left(1\right)}$,  edge label={node[midway,left,font=\scriptsize]{$\vert \lambda \vert$}}][$f^{\left(2\right)}$,  edge label={node[midway,right,font=\scriptsize]{$\lambda$}}]][$f^{\left(3\right)}$,  edge label={node[midway,right,font=\scriptsize]{$\mu$}}]][$f^{\left(4\right)}$,  edge label={node[midway,right,font=\scriptsize]{$\pi$}}]] \end{forest} 
\end{minipage}%
\begin{minipage}[t]{0.33\columnwidth}%
\begin{forest}calign=fixed edge angles, 
[$\left(f^{\left(1\right)}  \circ   f^{\left(2\right)} \right) \circ \left( f^{\left(3\right)} \circ f^{\left(4\right)}  \right)$[$f^{\left(1\right)} \circ f^{\left(2\right)}$,  edge label={node[midway,left,font=\scriptsize]{$\vert \pi \vert$}}[$f^{\left(1\right)}$,  edge label={node[midway,left,font=\scriptsize]{$\vert \mu \vert$}}][$f^{\left(2\right)}$,  edge label={node[midway,right,font=\scriptsize]{$\mu$}}]][$ f^{\left(3\right)} \circ f^{\left(4\right)} $,  edge label={node[midway,right,font=\scriptsize]{$\pi$}}[$f^{\left(3\right)}$,  edge label={node[midway,left,font=\scriptsize]{$\vert \lambda \vert$}}][$f^{\left(4\right)}$,  edge label={node[midway,right,font=\scriptsize]{$\lambda$}}]]]
\end{forest} 
\end{minipage}
\end{minipage}
\vspace{1cm}
\noindent\begin{minipage}[t]{1\columnwidth}%
\begin{minipage}[l]{0.33\columnwidth}%
\begin{center} 
\begin{forest}calign=fixed edge angles, 
[$f^{\left(1\right)}\circ \left(\left(f^{\left(2\right)}\circ f^{\left(3\right)}\right)\circ f^{\left(4\right)}\right)$
[$f^{\left(1\right)}$,  edge label={node[midway,left,font=\scriptsize]{$\vert \pi \vert$}}]
[$\left(f^{\left(2\right)}\circ f^{\left(3\right)}\right)\circ f^{\left(4\right)}$,  edge label={node[midway,right,font=\scriptsize]{$\pi$}}[$f^{\left(2\right)}\circ f^{\left(3\right)}$,  edge label={node[midway,left,font=\scriptsize]{$\vert \mu \vert$}}[$ f^{\left(2\right)}$,  edge label={node[midway,left,font=\scriptsize]{$\vert \lambda \vert$}}][$ f^{\left(3\right)}$,  edge label={node[midway,right,font=\scriptsize]{$\lambda$}}]][$ f^{\left(4\right)}$,  edge label={node[midway,right,font=\scriptsize]{$\mu$}}]]]
\end{forest} \par\end{center}%
\end{minipage}%
\begin{minipage}[r]{0.33\columnwidth}%
\begin{center} \begin{forest}calign=fixed edge angles, 
[$\left(f^{\left(1\right)}\circ \left(f^{\left(2\right)}\circ f^{\left(3\right)}\right)\right)\circ f^{\left(4\right)}$
[$f^{\left(1\right)}\circ \left(f^{\left(2\right)}\circ f^{\left(3\right)}\right)$,  edge label={node[midway,left,font=\scriptsize]{$\vert \pi \vert$}}[$f^{\left(1\right)}$,  edge label={node[midway,left,font=\scriptsize]{$\vert \mu \vert$}}][$f^{\left(2\right)}\circ f^{\left(3\right)}$,  edge label={node[midway,right,font=\scriptsize]{$\mu$}}[$f^{\left(2\right)}$,edge label={node[midway,left,font=\scriptsize]{$\vert \lambda \vert$}}][$f^{\left(3\right)}$,edge label={node[midway,right,font=\scriptsize]{$\lambda$}}]]][$f^{\left(4\right)}$,  edge label={node[midway,right,font=\scriptsize]{$\pi$}}]] 
\end{forest} \par\end{center}%
\end{minipage}
\end{minipage}

Consider now the general case $f^{(1)}\circ\cdots\circ f^{(n)}$.
There are $C_{n+1}$ ways to compose these functions associatively,
where $C_{n}$ is the $n-$th Catalan number
\[
C_{n}=\frac{1}{2n+1}\binom{2n}{n}.
\]
The associativity of function composition gives us then $C_{n+1}-1$ equivalent representations for generalized composition sums.
 
While writing down each of these $C_{n+1}$ representations is very messy, we present a simple algorithmic approach for doing so. The general formula for the composition $f^{\left(1\right)} \circ \dots f^{\left(n\right)}$ is  
\[
\sum_{  \substack{  \pi_1 \models n \\ \pi_2 \models \pi_1^{\pm} \\ \dots \\ \pi_{n-1} \models \pi_{n-2}^{\pm}  }   }
f_{\pi_{i_1}^{\pm}}^{\left(1\right)}
f_{\pi_{i_2}^{\pm}}^{\left(2\right)}
\dots
f_{\pi_{i_n}^{\pm}}^{\left(n\right)},
\]
with the notation $\pi_{i}^{+}=\pi_{i}$ and $\pi_{i}^{-}=\vert \pi_{i} \vert$. We draw the tree representation for function composition and determine the summation set by the following rules:
\begin{itemize}
\item
If the parent of the level corresponding to $\pi_{i}$ is a right child, we sum over $\pi_{i-1}^+ = \pi_{i-1}$. If it is a left child we sum over $\pi_{i-1}^- = \vert \pi_{i-1} \vert$
\item 
Select $f^{(i)}$ and look where it is a leaf. If it is a right child, we sum over the factor $f^{(i)}_{\pi_j}$, where $j$ is the depth of the leaf $f^{(i)}$. If $f^{(i)}$ is a leaf and left child we sum over the factor $f^{(i)}_{\vert \pi_j \vert}$.
\item
At the top level, the previous rules do not apply and we always sum over $f^{(i)}_{\pi_1}$ and $\pi_1 \models n$.
\end{itemize}
This way, the subscript is determined by the \textit{leaf's} position while the summation set is determined by the \textit{parent's} position in the corresponding tree. For instance, select the following tree: 

\begin{center} \begin{forest}calign=fixed edge angles, 
[$\left(f^{\left(1\right)}\circ \left(f^{\left(2\right)}\circ f^{\left(3\right)}\right)\right)\circ f^{\left(4\right)}$
[$f^{\left(1\right)}\circ \left(f^{\left(2\right)}\circ f^{\left(3\right)}\right)$,  edge label={node[midway,left,font=\scriptsize]{$\vert \pi_1 \vert$}}[$f^{\left(1\right)}$,  edge label={node[midway,left,font=\scriptsize]{$\vert \pi_2 \vert$}}][$f^{\left(2\right)}\circ f^{\left(3\right)}$,  edge label={node[midway,right,font=\scriptsize]{$\pi_2$}}[$f^{\left(2\right)}$,edge label={node[midway,left,font=\scriptsize]{$\vert \pi_3 \vert$}}][$f^{\left(3\right)}$,edge label={node[midway,right,font=\scriptsize]{$\pi_3$}}]]][$f^{\left(4\right)}$,  edge label={node[midway,right,font=\scriptsize]{$\pi_1$}}]].
\end{forest} \par\end{center}%
Consider $f^{(1)}$. It's a left child with depth $2$, so it has the subscript $\vert \pi_2 \vert$. Its parent $f^{\left(1\right)}\circ \left(f^{\left(2\right)}\circ f^{\left(3\right)}\right)$ is also a left child so the summation includes the terms $\pi_2 \models \vert \pi_1 \vert$. $f^{(3)}$ is a right child with depth $3$ so we sum over $f^{(3)}_{\pi_3}$. Its parent is a right child so the summation goes over $\pi_3 \models \pi_2$. Repeating this procedure for $f^{(2)}$ and $f^{(4)}$ allows us to recover our previous expression
\begin{equation}
\left(f^{\left(1\right)}\circ\left(f^{\left(2\right)}\circ f^{\left(3\right)}\right)\right)\circ f^{\left(4\right)}=\sum_{\substack{\pi_1\models n\\
\pi_2\models \vert \pi_1 \vert\\
\pi_3\models\pi_2
}
}f_{\vert \pi_2 \vert}^{\left(1\right)}f_{\vert \pi_3 \vert}^{\left(2\right)}f_{\pi_3}^{\left(3\right)}f_{\pi_1}^{\left(4\right)}.
\end{equation}

\section{Conclusion}
In this paper, we recognized Fuchs' generalized PI tree as a graphical method to represent sums over compositions. Then, the introduction of the Fa\'{a} di Bruno formula allowed us to introduce  further set of weights based on the number of parts in every compositions. Trees are then an easy 'bookkeeping' method to visualize the classical Fa\'{a} di Bruno formula. The introduction of this Fa\'{a} di Bruno formula also allowed us to synthesize many past works, since the nonlinear weights $f_n$ generalize most of the existing literature on compositions. Through this generating function methodology,
we were also able to find a generating function for sums over all parts of all compositions of $n$. Together, our results unite the existing literature on composition sums and provide an efficient method to graphically visualize them. For the first time, we also study iterated sums over compositions and link them to tree representations.

\end{document}